\theoremstyle{plain}
\newtheorem{thm}{Theorem}[section]
\newtheorem{lem}[thm]{Lemma}
\theoremstyle{remark}
\author{
Li-Chang Hung\footnote{Corresponding author's email address: \texttt{lichang.hung@gmail.com
} 
         }
\vspace{10mm}
\\
 \small
 \textit{
 Department of Mathematics, National Taiwan University, Republic of Taiwan}
\\
\\
\\
\\
 \small 
}
\title{Remark on the N-barrier method for a class of autonomous elliptic systems
}
\date{
\small 
Dedicated to my grandmother in memoriam
}
\begin{document}
\maketitle

\begin{abstract}


In this note, we aim to extend the previous work on an N-barrier maximum principle  (\cite{hung2015n,hung2015maximum}) to a more general class of systems of two equations. Moreover, an N-barrier maximum principle for systems of three equations is established.

\end{abstract}



\section{N-barrier maximum principle for two equations}\label{sec: NBMP 2 eqns}
\vspace{5mm}

We are concerned with the autonomous elliptic system in $\mathbb{R}$:
\begin{equation}\label{eqn: L-V BVP  before scaled without BCs}
\begin{cases}
d_1\,u_{xx}+\theta\,u_{x}+u\,f(u,v)=0, \quad x\in\mathbb{R}, \\ \\
d_2\,v_{xx}\hspace{0.8mm}+\theta\,v_{x}+v\,g(u,v)=0,\quad x\in\mathbb{R}, 
\end{cases}
\end{equation}
which arises from the study of traveling waves in the following reaction-diffusion system (\cite{Volperts94TWS-Parabolic}):
\begin{equation}\label{eqn: compet L-V sys of 2 species with diffu}
\begin{cases}
u_t=d_1\,u_{yy}+u\,f(u,v), \quad y\in\mathbb{R},\quad t>0,\\ \\
\hspace{0.7mm}v_t=d_2\,v_{yy}+v\,g(u,v), \quad y\in\mathbb{R},\quad t>0.\\
\end{cases}
\end{equation}
A positive solution $(u(x),v(x))=(u(y,t),v(y,t))$, $x=y-\theta \,t$ of \eqref{eqn: L-V BVP  before scaled without BCs} stands for a traveling wave solution of \eqref{eqn: compet L-V sys of 2 species with diffu}. Here $d_1$ and $d_2$ represent the diffusion rates and $\theta$ is the propagation speed of the traveling wave. Throughout, we assume, unless otherwise stated, that the following hypotheses on $f(u,v)\in C^{0}(\mathbb{R^{+}}\times\mathbb{R^{+}})$ and $g(u,v)\in C^{0}(\mathbb{R^{+}}\times\mathbb{R^{+}})$ are satisfied:
\begin{itemize}
  \item [$\mathbf{[H1]}$] the unique solution of $f(u,v)=g(u,v)=0$ and $u,v>0$ is  $(u,v)=(u^\ast,v^\ast)$;
  \item [$\mathbf{[H2]}$] the unique solution of $f(u,0)=0$ and $u>0$ is $u=u_1>0$; the unique solution of $f(0,v)=0, v>0$ is $v=v_1>0$; the unique solution of $g(u,0)=0, u>0$ is $u=u_2>0$; the unique solution of $g(0,v)=0, v>0$ is $v=v_2>0$; 
  \item [$\mathbf{[H3]}$]  as $u,v>0$ are sufficiently large, $f(u,v), g(u,v)<0$; as $u,v>0$ are sufficiently small, $f(u,v), g(u,v)>0$;
  \item [$\mathbf{[H4]}$] the two curves 
  \begin{equation}
  \mathcal{C}_f=\Big\{ (u,v)\;\big|\; f(u,v)=0,\;u,v\geq0\Big\}
  \end{equation}
  and
  \begin{equation}
  \mathcal{C}_g=\Big\{ (u,v)\;\big|\; g(u,v)=0,\;u,v\geq0\Big\}
  \end{equation}   
lie completely within the region $\mathcal{R}$, which is enclosed by the $u$-axis, the $v$-axis, the line $\bar{\mathcal{L}}$ given by
  \begin{equation}
  \bar{\mathcal{L}}=\big\{ (u,v)\;\big|\; \frac{u}{\bar{u}}+\frac{v}{\bar{v}}=1,\; u,v\geq0 \big\}
  \end{equation}
  and the line $\underaccent\bar{\mathcal{L}}$ given by
    \begin{equation}
  \underaccent\bar{\mathcal{L}}=\big\{ (u,v)\;\big|\; \frac{u}{\underaccent\bar{u}}+\frac{v}{\underaccent\bar{v}}=1,\;u,v\geq0\big\},
  \end{equation}
  for some $\bar{u}>\underaccent\bar{u}>0$, $\bar{v}>\underaccent\bar{v}>0$. 

\end{itemize}

\vspace{5mm}

In this note, the following boundary value problem for \eqref{eqn: L-V BVP  before scaled} is studied:
\begin{equation}\label{eqn: L-V BVP  before scaled}
\begin{cases}
\vspace{3mm}
d_1\,u_{xx}+\theta\,u_{x}+u\,f(u,v)=0, \quad x\in\mathbb{R}, \\
\vspace{3mm}
d_2\,v_{xx}\hspace{0.8mm}+\theta\,v_{x}+v\,g(u,v)=0,\quad x\in\mathbb{R}, \\
(u,v)(-\infty)=\text{\bf e}_{-},\quad (u,v)(+\infty)= \text{\bf e}_{+},
\end{cases}
\end{equation}
where $\text{\bf e}_{-}, \text{\bf e}_{+} =(0,0), (u_1,0), (0,v_2)$, or $(u^\ast,v^\ast)$. We call a solution $(u(x),v(x))$ of \eqref{eqn: L-V BVP  before scaled} an $(\text{\bf e}_{-},\text{\bf e}_{+})$-wave. As in \cite{hung2015n,hung2015maximum}, adding the two equations in \eqref{eqn: L-V BVP  before scaled} leads to an equation involving $p(x)=\alpha\,u(x)+\beta\,v(x)$ and $q(x)=d_1\,\alpha\,u(x)+d_2\,\beta\,v(x)$, i.e. 
\begin{align}\label{eq: q''+p'+f+g=0 before scale}
  0&=\alpha\,\big(d_1\,u_{xx}+\theta\,u_{x}+u\,f(u,v)\big)+
      \beta\,\big(d_2\,v_{xx}+\theta\,v_{x}+v\,g(u,v)\big)\notag\\[3mm]
  &=q''(x)+\theta\,p'(x)+ 
  \alpha\,u\,f(u,v)+
    \beta\,v\,g(u,v)\notag\\[3mm]
  &=q''(x)+\theta\,p'(x)+F(u,v),
\end{align}
where $\alpha$, $\beta>0$ are \textit{arbitrary} constants and $F(u,v)=\alpha\,u\,(\sigma_1-c_{11}\,u-c_{12}\,v)+\beta\,v\,(\sigma_2-c_{21}\,u-c_{22}\,v)$. To show the main result, we begin with a useful lemma.

\vspace{5mm}

\begin{lem}\label{lem: F(u,v)=0 is contained in R}
Under $\mathbf{[H1]}$$\sim$$\mathbf{[H4]}$, the curve $F(u,v)=0$ in the first quadrant of the $uv$-plane, i.e
\begin{equation}
\mathcal{C}_F=\Big\{ (u,v)\;\big|\; F(u,v)=0,\;u,v\geq0\Big\}
\end{equation}
lies completely within the region $\mathcal{R}$.  
\end{lem}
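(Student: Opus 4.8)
The plan is to prove the lemma by a sign argument: I will show that $F(u,v)=\alpha\,u\,f(u,v)+\beta\,v\,g(u,v)$ is strictly positive on the part of the first quadrant lying strictly below $\underaccent\bar{\mathcal{L}}$ and strictly negative on the part lying strictly above $\bar{\mathcal{L}}$, so that its nodal set $\mathcal{C}_F$ is forced into the band $\mathcal{R}$ between the two lines. Writing
\begin{equation*}
\mathcal{R}_{-}=\Big\{(u,v)\ \big|\ u,v\geq0,\ \frac{u}{\underaccent\bar{u}}+\frac{v}{\underaccent\bar{v}}<1\Big\},\qquad
\mathcal{R}_{+}=\Big\{(u,v)\ \big|\ u,v\geq0,\ \frac{u}{\bar{u}}+\frac{v}{\bar{v}}>1\Big\},
\end{equation*}
so that $\mathcal{R}=\{u,v\geq0\}\setminus(\mathcal{R}_{-}\cup\mathcal{R}_{+})$, it suffices to check that $\mathcal{C}_F\cap\mathcal{R}_{-}=\emptyset$ and $\mathcal{C}_F\cap\mathcal{R}_{+}=\emptyset$.

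First I would pin down the signs of $f$ and $g$ on these outer regions. By $\mathbf{[H4]}$ both nodal curves $\mathcal{C}_f$ and $\mathcal{C}_g$ lie in $\mathcal{R}$, hence neither meets the open sets $\mathcal{R}_{-}$ and $\mathcal{R}_{+}$. Each of $\mathcal{R}_{-}$, $\mathcal{R}_{+}$ is the intersection of a half-plane with the first quadrant, hence convex and in particular connected. Since $f$ is continuous and does not vanish on the connected set $\mathcal{R}_{-}$, it has a fixed sign there; evaluating at a point of $\mathcal{R}_{-}$ with $u,v>0$ small and invoking $\mathbf{[H3]}$ gives $f>0$, and likewise $g>0$ throughout $\mathcal{R}_{-}$. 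By the same reasoning $f$ and $g$ keep a constant sign on the connected set $\mathcal{R}_{+}$, which by the large-$(u,v)$ part of $\mathbf{[H3]}$ must be negative, so $f,g<0$ throughout $\mathcal{R}_{+}$.

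It then follows that for $(u,v)\in\mathcal{R}_{-}$ with $u,v>0$ we have $F(u,v)=\alpha\,u\,f(u,v)+\beta\,v\,g(u,v)>0$, since $\alpha,\beta>0$ and both summands are positive; similarly $F<0$ on $\mathcal{R}_{+}\cap\{u,v>0\}$. The boundary cases on the axes are handled by the same idea: on the $u$-axis $F(u,0)=\alpha\,u\,f(u,0)$, and $\mathbf{[H2]}$ together with the fact that $\mathcal{C}_f$ meets the $u$-axis only at $u_1\in[\underaccent\bar{u},\bar{u}]$ gives $f(u,0)>0$ for $0<u<\underaccent\bar{u}$ and $f(u,0)<0$ for $u>\bar{u}$, with the analogous statement on the $v$-axis for $g$. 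Thus the only zeros of $F$ on $\mathcal{R}_{-}\cup\mathcal{R}_{+}$ are $(u_1,0)$ and $(0,v_2)$, which lie on $\mathcal{C}_f$ and $\mathcal{C}_g$ respectively and hence in $\mathcal{R}$, together with the degenerate corner $(0,0)$. This yields $\mathcal{C}_F\subset\mathcal{R}$.

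The step I expect to require the most care is the propagation of sign information: the hypotheses $\mathbf{[H2]}$–$\mathbf{[H3]}$ only prescribe the signs of $f$ and $g$ near the origin and near infinity, so the crux is to promote this to a definite sign on all of $\mathcal{R}_{-}$ and all of $\mathcal{R}_{+}$. This is exactly where $\mathbf{[H4]}$ is used: it guarantees that $\mathcal{C}_f$ and $\mathcal{C}_g$ stay inside $\mathcal{R}$, so that $f$ and $g$ cannot change sign on the outer regions, and the convexity—hence connectedness—of $\mathcal{R}_{\pm}$ makes the constant-sign conclusion rigorous. The remaining minor point is the behaviour on the coordinate axes and at $(0,0)$, which is disposed of by the direct one-variable check above.
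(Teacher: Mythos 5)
The paper offers no actual argument for this lemma (the proof is declared ``elementary'' and omitted), so there is no method to compare against; your proposal supplies the missing argument, and it is essentially correct. The mechanism is the right one: $\mathcal{R}_{-}$ and $\mathcal{R}_{+}$ are convex, hence connected; $\mathbf{[H4]}$ keeps $\mathcal{C}_f$ and $\mathcal{C}_g$ out of them, so the continuous functions $f$ and $g$ have constant sign on each piece; $\mathbf{[H3]}$ pins those signs down (positive on $\mathcal{R}_{-}$, negative on $\mathcal{R}_{+}$); and since $\alpha,\beta>0$ and $u,v\geq0$ with at least one coordinate positive, $F=\alpha\,u\,f+\beta\,v\,g$ inherits a strict sign on $\mathcal{R}_{-}\setminus\{(0,0)\}$ and on $\mathcal{R}_{+}$. (Note that you should work with $F=\alpha\,u\,f+\beta\,v\,g$ as you do, not with the Lotka--Volterra expression the paper writes after \eqref{eq: q''+p'+f+g=0 before scale}, which is a leftover from the special case.) Two small points deserve tightening. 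First, your sentence claiming that ``the only zeros of $F$ on $\mathcal{R}_{-}\cup\mathcal{R}_{+}$ are $(u_1,0)$ and $(0,v_2)$'' is misstated: those two points lie in $\mathcal{R}$, not in $\mathcal{R}_{-}\cup\mathcal{R}_{+}$, and the separate axis discussion via $\mathbf{[H2]}$ is redundant, since $\mathcal{R}_{\pm}$ already contain their axis portions and the connectedness argument covers them. Second, the ``degenerate corner'' is not merely degenerate: $F(0,0)=0$ always, so $(0,0)\in\mathcal{C}_F$ while $(0,0)\notin\mathcal{R}$, and the lemma as literally stated is false at the origin; the correct conclusion of your argument is $\mathcal{C}_F\setminus\{(0,0)\}\subset\mathcal{R}$, which is what the N-barrier construction actually needs (and is consistent with the theorem only asserting the trivial lower bound $\chi=0$ when $\text{\bf e}_{\pm}=(0,0)$). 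With that restatement, your proof is complete and rigorous.
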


\begin{proof}
The proof is elementary and is thus omitted here.
\end{proof}

\vspace{5mm}

We are now in the position to prove 

\vspace{5mm}

\begin{thm}[\textbf{N-Barrier Maximum Principle for Systems of Two Equations}]\label{thm: N-Barrier Maximum Principle for 2 Species}
Assume $\mathbf{[H1]}$$\sim$$\mathbf{[H4]}$ hold. Suppose that there exists a pair of positive solutions $(u(x),v(x))$ of the boundary value problem
\begin{equation}\label{eqn: L-V BVP  before scaled}
\begin{cases}
\vspace{3mm}
d_1\,u_{xx}+\theta\,u_{x}+u\,f(u,v)=0, \quad x\in\mathbb{R}, \\
\vspace{3mm}
d_2\,v_{xx}\hspace{0.8mm}+\theta\,v_{x}+v\,g(u,v)=0,\quad x\in\mathbb{R}, \\
(u,v)(-\infty)=\text{\bf e}_{-},\quad (u,v)(+\infty)= \text{\bf e}_{+},
\end{cases}
\end{equation}
where $\text{\bf e}_{-}, \text{\bf e}_{+} =(0,0), (u_1,0), (0,v_2)$, or $(u^\ast,v^\ast)$. For any $\alpha,\beta>0$, let $p(x)=\alpha\,u(x)+\beta\,v(x)$ and $q(x)=\alpha\,d_1\,u(x)+\beta\,d_2\,v(x)$. We have
\begin{equation}\label{eqn: upper and lower bounds of p}
\underaccent\bar{p}
\leq p(x)\leq 
\bar{p}, \quad x\in\mathbb{R},
\end{equation}
where 
\begin{equation}
\bar{p}=\max\big(\alpha\,\bar{u},\beta\,\bar{v}\big)\,\frac{\max(d_1,d_2)}{\min(d_1,d_2)}
\end{equation}
and
\begin{equation}
\underaccent\bar{p}=\min\big(\alpha\,\underaccent\bar{u},\beta\,\underaccent\bar{v}\big)\,\frac{\displaystyle\min(d_1,d_2)}{\displaystyle\max(d_1,d_2)}\,\chi
,
\end{equation}
where
\begin{equation}
\chi
=
\begin{cases}
\vspace{3mm}
1,
\quad \text{if} \quad  \text{\bf e}_{\pm}\neq(0,0),\\
0,
\quad \text{if} \quad  \text{\bf e}_{\pm}=(0,0).
\end{cases}
\end{equation}




\end{thm}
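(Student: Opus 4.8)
The plan is to reduce the vector estimate on $p$ to a scalar estimate on $q$, and then to run the N-barrier argument of \cite{hung2015n,hung2015maximum} on $q$ using the sign information for $F$ supplied by Lemma \ref{lem: F(u,v)=0 is contained in R}. The starting point is the elementary sandwiching
\begin{equation}
\min(d_1,d_2)\,p(x)\leq q(x)\leq \max(d_1,d_2)\,p(x),\qquad x\in\mathbb{R},
\end{equation}
which is immediate from $q=\alpha d_1 u+\beta d_2 v$, $p=\alpha u+\beta v$ and $\min(d_1,d_2)\leq d_i\leq\max(d_1,d_2)$. Consequently $p\leq\bar{p}$ will follow once I show $q\leq\bar{q}:=\max(d_1,d_2)\,\max(\alpha\bar{u},\beta\bar{v})$ and divide by $\min(d_1,d_2)$; likewise, when $\mathbf{e}_{\pm}\neq(0,0)$, the bound $p\geq\underaccent\bar{p}$ will follow from $q\geq\underaccent\bar{q}:=\min(d_1,d_2)\,\min(\alpha\underaccent\bar{u},\beta\underaccent\bar{v})$ after dividing by $\max(d_1,d_2)$. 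This reduction is exactly where the factor $\max(d_1,d_2)/\min(d_1,d_2)$ in the statement originates.

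Next I would convert Lemma \ref{lem: F(u,v)=0 is contained in R} into a sign condition expressed through the level lines of $q$. Maximising the linear functional $q$ over the triangle $\{u,v\geq0,\ \tfrac{u}{\bar{u}}+\tfrac{v}{\bar{v}}\leq1\}$ (whose only candidates are its vertices) gives $\max q=\max(\alpha d_1\bar{u},\beta d_2\bar{v})\leq\bar{q}$, so $q(x)>\bar{q}$ forces $(u(x),v(x))$ to lie strictly above $\bar{\mathcal{L}}$. By $\mathbf{[H3]}$ together with Lemma \ref{lem: F(u,v)=0 is contained in R}, $\mathcal{C}_F$ lies inside $\mathcal{R}$, hence every point above $\bar{\mathcal{L}}$ satisfies $F<0$; symmetrically, minimising $q$ over the region above $\underaccent\bar{\mathcal{L}}$ shows that $q(x)<\underaccent\bar{q}$ forces $(u(x),v(x))$ below $\underaccent\bar{\mathcal{L}}$, where $F>0$. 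Thus
\begin{equation}
F<0\ \text{ on }\ \{q>\bar{q}\},\qquad F>0\ \text{ on }\ \{q<\underaccent\bar{q}\}.
\end{equation}

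With these signs in hand I would apply the N-barrier argument to the combined identity \eqref{eq: q''+p'+f+g=0 before scale}, namely $q''+\theta\,p'+F=0$. Arguing by contradiction, suppose $\sup_x q>\bar{q}$. Since the admissible endpoints $\mathbf{e}_{\pm}\in\{(0,0),(u_1,0),(0,v_2),(u^\ast,v^\ast)\}$ all lie in $\overline{\mathcal{R}}$ and therefore satisfy $q\leq\bar{q}$, the super-level set $S=\{x:q(x)>\bar{q}\}$ is a nonempty bounded open set on whose boundary $q=\bar{q}$. On each component $(a,b)$ of $S$ the barrier geometry yields $q'(a)\geq0\geq q'(b)$, while integrating $q''+\theta\,p'+F=0$ over $(a,b)$ produces a boundary contribution $q'(b)-q'(a)\leq0$, an advection contribution $\theta\,(p(b)-p(a))$, and an interior contribution $\int_a^b F<0$. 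Balancing these over all components, and using that at the endpoints $(u,v)$ is pinned to the bounded segment of the line $\{q=\bar{q}\}$, is what should force the contradiction; the upper bound on $p$ then follows. The lower bound is entirely parallel, with $\{q<\underaccent\bar{q}\}$ replacing $\{q>\bar{q}\}$, and the factor $\chi$ records the degenerate case $\mathbf{e}_{\pm}=(0,0)$: there the orbit limits onto the origin, $q$ may decay to $0$, and no positive lower bound survives, so $\underaccent\bar{p}=0$.

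The step I expect to be the genuine obstacle is the advection term $\theta\,p'$. Because $d_1\neq d_2$ in general, $q$ and $p$ have distinct level lines, so a naive scalar maximum principle fails: at an interior maximum $x_0$ of $q$ one has $q'(x_0)=0$, yet $p'(x_0)=\beta\,v'(x_0)\,(d_1-d_2)/d_1\neq0$, and the sign of $\theta\,p'(x_0)$ is undetermined, so the condition $q''(x_0)\leq0$ cannot be turned into a contradiction pointwise. Overcoming this is precisely the purpose of the N-barrier construction of \cite{hung2015n,hung2015maximum}: the estimate must be extracted from the global integral balance above rather than from the equation at a single point, and the scheme is arranged so that the resulting bound is independent of $\theta$. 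I would therefore concentrate most of the effort on making the cancellation of the advection contributions $\theta\,(p(b)-p(a))$ precise across the components of $S$.
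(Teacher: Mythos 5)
Your reduction $\min(d_1,d_2)\,p\leq q\leq\max(d_1,d_2)\,p$, your translation of Lemma~\ref{lem: F(u,v)=0 is contained in R} and $\mathbf{[H3]}$ into the sign conditions $F<0$ on $\{q>\bar{q}\}$ and $F>0$ on $\{q<\underaccent\bar{q}\}$, and your identification of the boundary states as lying in $\overline{\mathcal{R}}$ are all correct, and they capture exactly the new content of this paper (the paper's own proof consists of little more than this observation plus a pointer to \cite{hung2015n,hung2015maximum} for the contradiction argument). However, the contradiction scheme you actually propose --- integrating \eqref{eq: q''+p'+f+g=0 before scale} over the connected components $(a,b)$ of the single super-level set $S=\{q>\bar{q}\}$ and hoping the advection contributions $\theta\,(p(b)-p(a))$ cancel ``across the components'' --- has a genuine gap, and it is precisely the one you flag yourself. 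On one component you only obtain $\theta\,(p(b)-p(a))=-\big(q'(b)-q'(a)\big)-\int_a^b F>0$, which is not a contradiction, because $a$ and $b$ lie on the same level line of $q$ but (for $d_1\neq d_2$) on different level lines of $p$, so $p(a)\neq p(b)$ in general. Summing over components does not help: the components are separated by intervals on which $q\leq\bar{q}$, so nothing telescopes, and there is no global identity forcing the advection contributions to cancel.

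The mechanism that closes this gap in the N-barrier method --- and the reason the construction involves \emph{three} nested level sets rather than one --- is different from what you sketch. One takes two level lines of $p$ and one of $q$, nested so that (for the upper bound) $\{\alpha u+\beta v\geq\lambda_2\}\supset\{\alpha d_1u+\beta d_2v\geq\eta\}\supset\{\alpha u+\beta v\geq\lambda_1\}$ in the first quadrant, with the outermost region still in the zone where $F<0$; this nesting is exactly what the sandwich $\min(d_i)p\leq q\leq\max(d_i)p$ provides and is where the ratio $\max(d_1,d_2)/\min(d_1,d_2)$ enters. Assuming $\sup p>\lambda_1$, one then splits into the cases $\theta\geq0$ and $\theta<0$ and chooses the two integration endpoints \emph{asymmetrically}: one endpoint is a crossing of the $q$-level $\eta$ (which supplies the needed sign of $q'$ there), the other a crossing of the $p$-level $\lambda_2$ (which pins $p$ at that endpoint to $\lambda_2$ while the nesting bounds $p$ at the $q$-endpoint by $\lambda_2$ from the correct side), so that $\theta\,(p(x_2)-p(x_1))$ acquires a definite sign matching that of the other two terms. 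Without this asymmetric choice of endpoints and the case split on $\operatorname{sgn}\theta$, the pointwise/componentwise argument you describe cannot be completed, so the core of the proof is still missing from your proposal.
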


\begin{proof}

We employ \textit{the N-barrier method} developed in \cite{hung2015n,hung2015maximum} to show \eqref{eqn: upper and lower bounds of p}. It is readily seen that once appropriate \textit{N-barriers} are constructed, the upper and lower bounds in \eqref{eqn: upper and lower bounds of p} are given in exactly the same way as in \cite{hung2015n,hung2015maximum}. 

To prove the lower bound, we can construct an N-barrier which starts either from the point $(\underaccent\bar{u},0)$ or $(0,\underaccent\bar{v})$. Due to Lemma~\ref{lem: F(u,v)=0 is contained in R}, the N-barrier is away from the region $\mathcal{R}$. Then the lower bound can be proved by contradiction as in \cite{hung2015n,hung2015maximum}. 

On the other hand, the upper bound can be shown in the same manner by constructing an N-barrier starting either from the point $(\bar{u},0)$ or $(0,\bar{v})$. In addition, it is clear that when the boundary conditions $\text{\bf e}_{+}=(0,0)$ or $\text{\bf e}_{-}=(0,0)$, a trivial lower bound of $p(x)$, i.e. $p(x)\geq0$ can only be given. This completes the proof.



\end{proof}

\vspace{5mm}
\setcounter{equation}{0}
\setcounter{figure}{0}
\section{N-barrier maximum principle for three equations}\label{sec: NBMP 3 eqns}
\vspace{5mm}

In this section, the following assumptions on $f(u,v,w)\in C^{0}(\mathbb{R^{+}}\times\mathbb{R^{+}}\times\mathbb{R^{+}})$, $g(u,v,w)\in C^{0}(\mathbb{R^{+}}\times\mathbb{R^{+}}\times\mathbb{R^{+}})$, and $h(u,v,w)\in C^{0}(\mathbb{R^{+}}\times\mathbb{R^{+}}\times\mathbb{R^{+}})$ are satisfied:
\begin{itemize}
  \item [$\mathbf{[A1]}$] \textbf{\emph{(Unique coexistence state)}} 
  $(u,v,w)=(u^\ast,v^\ast,w^\ast)$ is the unique solution of
  \begin{equation}
\begin{cases}
\vspace{3mm}
f(u,v,w)=0,\quad u,v,w>0, \\
\vspace{3mm}
g(u,v,w)=0,\quad u,v,w>0, \\
h(u,v,w)=0,\quad u,v,w>0. \\
\end{cases}
\end{equation}

  
  \item [$\mathbf{[A2]}$] \textbf{\emph{(Competitively exclusive states)}}
For some $u_i,v_i,w_i>0$ $(i=1,2,3.)$ with $(\Lambda_1-\Lambda_2)^2+(\Lambda_1-\Lambda_3)^2+(\Lambda_2-\Lambda_3)^2\neq0$ $(\Lambda=u,v,w.)$,
\begin{equation}
f(u_1,0,0)=f(0,v_1,0)=f(0,0,w_1)=0,
\end{equation}
\begin{equation}
g(u_2,0,0)=g(0,v_2,0)=g(0,0,w_2)=0,
\end{equation}
\begin{equation}
h(u_3,0,0)=h(0,v_3,0)=h(0,0,w_3)=0.
\end{equation}

   
  \item [$\mathbf{[A3]}$] \textbf{\emph{(Logistic-growth nonlinearity)}}
  For $u,v,w>0$, as $u,v,w$ are sufficiently large
  \begin{equation}
  f(u,v,w),g(u,v,w),h(u,v,w)<0,
  \end{equation}
  and  
  \begin{equation}
  f(u,v,w),g(u,v,w),h(u,v,w)>0,
  \end{equation}
  when $u,v,w$ are sufficiently small.
  \item [$\mathbf{[A4]}$] \textbf{\emph{(Covering pentahedron)}}
the three surfaces 
  \begin{equation}
  \mathcal{S}_f=\big\{ (u,v,w)\;\big|\; f(u,v,w)=0,\;u,v,w\geq0\big\},
  \end{equation}
  \begin{equation}
  \mathcal{S}_g=\big\{ (u,v,w)\;\big|\; g(u,v,w)=0,\;u,v,w\geq0\big\},
  \end{equation}  
  \begin{equation}
  \mathcal{S}_h=\big\{ (u,v,w)\;\big|\; h(u,v,w)=0,\;u,v,w\geq0\big\},
  \end{equation}
lie completely within the pentahedron (a polyhedron with five faces) $\mathcal{PH}$, which is enclosed by the $uv$-plane, the $uw$-plane, and the $vw$-plane as well as the planes 
  \begin{equation}
  \bar{\mathcal{P}}=\big\{ (u,v,w)\;\big|\; \frac{u}{\bar{u}}+\frac{v}{\bar{v}}+\frac{w}{\bar{w}}=1,\; u,v,w\geq0 \big\}
  \end{equation}
  and
    \begin{equation}
  \underaccent\bar{\mathcal{P}}=\big\{ (u,v,w)\;\big|\; \frac{u}{\underaccent\bar{u}}+\frac{v}{\underaccent\bar{v}}+\frac{w}{\underaccent\bar{w}}=1,\;u,v,w\geq0\big\}
  \end{equation}
for some $\bar{u}>\underaccent\bar{u}>0$, $\bar{v}>\underaccent\bar{v}>0$, $\bar{w}>\underaccent\bar{w}>0$. 
  


\end{itemize}

\vspace{5mm}

We can prove in a similar manner that an N-barrier maximum principle remains true for systems of three equations.

\vspace{5mm}

\begin{thm}[\textbf{N-Barrier Maximum Principle for Systems of Three Equations}]\label{thm: N-barrier Maximum Principle for 3 species}
Assume $\mathbf{[A1]}$$\sim$$\mathbf{[A4]}$ are satisfied.
Suppose that there exists a pair of positive solutions $(u(x),v(x),w(x))$ of the boundary value problem
\begin{equation}\label{eqn: L-V BVP  before scaled 3 eqns}
\begin{cases}
\vspace{3mm}
d_1\,u_{xx}+\theta\,u_{x}+u\,f(u,v,w)=0, \quad x\in\mathbb{R}, \\
\vspace{3mm}
d_2\,v_{xx}\hspace{0.8mm}+\theta\,v_{x}+v\,g(u,v,w)=0,\quad x\in\mathbb{R}, \\
\vspace{3mm}
d_3\,w_{xx}\hspace{0.0mm}+\theta\,w_{x}+w\,h(u,v,w)=0,\quad x\in\mathbb{R}, \\
(u,v,w)(-\infty)=\text{\bf e}_{-},\quad (u,v,w)(+\infty)= \text{\bf e}_{+},
\end{cases}
\end{equation}
where $\text{\bf e}_{-}, \text{\bf e}_{+} =(0,0,0), (u_1,0,0), (0,v_2,0), (0,0,w_3), (u^\ast,v^\ast,w^\ast)$. 
For any $\alpha,\beta,\gamma>0$, let $p(x)=\alpha\,u(x)+\beta\,v(x)+\gamma\,w(x)$. We have
\begin{equation}
\min\big(\alpha\,\underaccent\bar{u},\beta\,\underaccent\bar{v},\gamma\,\underaccent\bar{w}\big)
\frac{\min(d_1,d_2,d_3)}{\max(d_1,d_2,d_3)}\,\chi
\leq p(x)\leq 
\max\big(\alpha\,\bar{u},\beta\,\bar{v},\gamma\,\bar{w}\big)
\frac{\max(d_1,d_2,d_3)}{\min(d_1,d_2,d_3)}
\end{equation}
for $x\in\mathbb{R}$, where
\begin{equation}
\chi
=
\begin{cases}
\vspace{3mm}
1,
\quad \text{if} \quad  \text{\bf e}_{\pm}\neq(0,0,0),\\
0,
\quad \text{if} \quad  \text{\bf e}_{\pm}=(0,0,0).
\end{cases}
\end{equation}

\end{thm}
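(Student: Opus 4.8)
The plan is to mirror the two-equation argument of Theorem~\ref{thm: N-Barrier Maximum Principle for 2 Species}, so the first step is the same algebraic reduction. Multiplying the three equations of \eqref{eqn: L-V BVP  before scaled 3 eqns} by $\alpha$, $\beta$, $\gamma$ respectively and adding them gives, with $p(x)=\alpha\,u+\beta\,v+\gamma\,w$ and $q(x)=\alpha\,d_1\,u+\beta\,d_2\,v+\gamma\,d_3\,w$,
\begin{equation}
q''(x)+\theta\,p'(x)+F(u,v,w)=0,\qquad x\in\mathbb{R},
\end{equation}
where $F(u,v,w)=\alpha\,u\,f(u,v,w)+\beta\,v\,g(u,v,w)+\gamma\,w\,h(u,v,w)$. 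Before invoking the N-barrier method I would first establish the three-dimensional analogue of Lemma~\ref{lem: F(u,v)=0 is contained in R}: the surface $\mathcal{S}_F=\{(u,v,w)\mid F=0,\ u,v,w\ge 0\}$ lies inside the pentahedron $\mathcal{PH}$. This follows from the sign structure forced by $\mathbf{[A3]}$ and $\mathbf{[A4]}$: since $\mathcal{S}_f,\mathcal{S}_g,\mathcal{S}_h$ all lie below $\bar{\mathcal{P}}$ and $f,g,h<0$ for large arguments, continuity gives $f,g,h<0$ strictly above $\bar{\mathcal{P}}$, whence $F<0$ there; symmetrically $F>0$ below $\underaccent\bar{\mathcal{P}}$. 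Hence $\{F=0\}$ is trapped between the two planes, i.e. inside $\mathcal{PH}$.

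Next comes the geometric observation that makes the axis-intercepts appear in the bounds. On and below $\bar{\mathcal{P}}$ the state $(u,v,w)$ lies in the tetrahedron with vertices $(0,0,0)$, $(\bar u,0,0)$, $(0,\bar v,0)$, $(0,0,\bar w)$, so the linear functional $p$ is maximized at a vertex and $p\le\max(\alpha\bar u,\beta\bar v,\gamma\bar w)$; equivalently, whenever $p$ exceeds this value the state lies above $\bar{\mathcal{P}}$ and therefore $F<0$. With this sign control I would construct, exactly as in \cite{hung2015n,hung2015maximum}, an N-barrier anchored at the intercepts $(\bar u,0,0)$, $(0,\bar v,0)$, $(0,0,\bar w)$ of $\bar{\mathcal{P}}$ for the upper bound, and at the corresponding intercepts of $\underaccent\bar{\mathcal{P}}$ for the lower bound. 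By the three-dimensional analogue of Lemma~\ref{lem: F(u,v)=0 is contained in R} each barrier stays on the side of the relevant plane where $F$ keeps a definite sign, and a crossing/contradiction argument identical in spirit to the two-equation case then shows the trajectory cannot pass the barrier.

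Finally I would convert this control of the trajectory into the stated bounds on $p$ by means of $\min(d_1,d_2,d_3)\,p\le q\le\max(d_1,d_2,d_3)\,p$, which is responsible for the factors $\tfrac{\max(d_1,d_2,d_3)}{\min(d_1,d_2,d_3)}$ and $\tfrac{\min(d_1,d_2,d_3)}{\max(d_1,d_2,d_3)}$; the vertex computation supplies $\max(\alpha\bar u,\beta\bar v,\gamma\bar w)$ for the upper bound and, evaluating $p$ at the intercepts of $\underaccent\bar{\mathcal{P}}$ and taking the minimum, $\min(\alpha\underaccent\bar u,\beta\underaccent\bar v,\gamma\underaccent\bar w)$ for the lower one, while the case $\text{\bf e}_{\pm}=(0,0,0)$ admits only the trivial lower bound $p\ge0$, which accounts for $\chi$. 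The main obstacle is the N-barrier construction itself: the two-equation barrier is a planar ``N''-shaped curve, and its effectiveness rests on reconciling the discrepancy between $p$ and the diffusion-weighted $q$, which fail to be proportional when the $d_i$ differ. Re-deriving this construction in the three-dimensional phase space, so that the barrier still confines the trajectory despite the $p$--$q$ mismatch, together with verifying the three-dimensional version of Lemma~\ref{lem: F(u,v)=0 is contained in R}, is where the real work lies; once these are in place the inequalities follow verbatim from \cite{hung2015n,hung2015maximum}.
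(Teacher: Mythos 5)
The paper gives no proof of this theorem at all --- it only remarks that the result ``can be proved in a similar manner'' to the two-equation case, whose own proof is itself a sketch deferring the N-barrier construction to \cite{hung2015n,hung2015maximum}. Your proposal follows exactly that intended route (summing the weighted equations to get $q''+\theta p'+F=0$, the three-dimensional analogue of Lemma~\ref{lem: F(u,v)=0 is contained in R}, barriers anchored at the intercepts of $\bar{\mathcal{P}}$ and $\underaccent\bar{\mathcal{P}}$, and the comparison $\min_i d_i\,p\le q\le\max_i d_i\,p$), and in fact supplies more detail than the paper itself does.
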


\vspace{5mm}





\vspace{5mm}










\end{document}